\newtheorem{theorem}{Theorem}
\newtheorem{corollary}[theorem]{Corollary}
\newtheorem{definition}[theorem]{Definition}
\newtheorem{example}[theorem]{Example}
\newtheorem{lemma}[theorem]{Lemma}
\newtheorem{proposition}[theorem]{Proposition}
\newtheorem{remark}[theorem]{Remark}
\newenvironment{proof}[1][Proof]{\noindent\textbf{#1.} }{\ \rule{0.5em}{0.5em}}
\begin{document}

\title{An alternative solvability criterion for the Dirichlet problem for
the minimal surface equation and an application to the mean curvature flow}
\author{A. Aiolfi, G. Nunes, J. Ripoll, L. Sauer, R. Soares }
\maketitle

\begin{abstract}
We propose an alternative condition for the solvability of the Diri-chlet
problem for the minimal surface equation that applies to non-mean convex
domains. We introduce a structural condition, obtained from a second-order
ordinary differential equation, which allows the con\nolinebreak struction
of explicit boundary barriers and it can also be applied to unbounded
domains. In the setting of Hadamard manifolds, this condition relates the
geometry of the domain to the admissible boun-dary data in a direct way.

In Euclidean space the condition leads to solvability under
geo-met\nolinebreak ric hypotheses of a different nature from those in the
classical Jenkins--Serrin theory, and in some configurations it applies
where the Jenkins--Serrin method does not. A central point of the present
approach is that the geometric restrictions and the boundary data enter
independently.

The same barrier construction can be used for graphical mean curvature flow.
This yields short-time existence with prescribed boun-dary values even when
the boundary is not mean convex. When mean convexity is present, one
recovers the classical graphical setting.
\end{abstract}

\section{Introduction}

Let $M$ be a complete $n$-dimensional Riemannian manifold, $n \geq 2$, and
let $\Omega \varsubsetneq M$ be a $C^2$-domain. The graph of a function $u
\in C^2(\Omega)$ is a minimal graph in $M \times \mathbb{R}$ if and only if $%
\mathcal{M}(u) = 0$ in $\Omega$, where 
\begin{equation}
\mathcal{M}(u) := \left(1 + |\nabla u|^2\right)\Delta u - \nabla^2 u(\nabla
u, \nabla u).  \label{Op}
\end{equation}

We consider the Dirichlet problem: 
\begin{align}
\mathcal{M}(u)& =0\quad \text{in }\Omega ,\quad u\in C^{2}(\overline{\Omega }%
),  \notag \\
u|_{\partial \Omega }& =\varphi |_{\partial \Omega },  \label{DP}
\end{align}%
where $\varphi \in C^{2}(\overline{\Omega })$ is given.

Let $H_{\partial \Omega}$ denote the mean curvature of $\partial \Omega$
with respect to the inner orientation, and define 
\begin{equation*}
\partial \Omega^{-} := \mathrm{clos}\left\{ p \in \partial \Omega \,;\,
H_{\partial \Omega}(p) < 0 \right\}.
\end{equation*}

In the case of bounded domains, it is well known that if $\Omega \subset 
\mathbb{R}^{n}$, the Dirichlet problem for the minimal surface equation
admits a solution for arbitrary continuous boundary data if and only if $%
\partial \Omega ^{-}=\emptyset $ (\cite{F}, \cite{JS}). The existence part
of this result has been generalized to Riemannian manifolds (see, e.g., \cite%
{DHL}). If $\partial \Omega ^{-}\neq \emptyset $, then, as shown in Theorem
1 of \cite{ARS}, which generalizes the classical result of H. Jenkins and J.
Serrin (Theorem 2 of \cite{JS}), a bound on the oscillation of the boundary
data $\varphi $ is sufficient to ensure solvability of \eqref{DP}, namely, 
\begin{equation}
\omega _{\varphi }(\partial \Omega ):=\sup_{\partial \Omega }\varphi
-\inf_{\partial \Omega }\varphi \leq B\left( |D\varphi |,|D^{2}\varphi |,|A|,%
\mathrm{Ric}_{M}\right) ,  \label{coc}
\end{equation}%
where $|A|$ is the norm of the second fundamental form of $\partial \Omega $%
, and $B$ is an explicit function (see \cite{ARS}, Section 2, p. 78 and
Lemma 4; see also \cite{JS}, Section 3, p. 179).

However, even in the Euclidean case, it is quite complicated to explicitly
verify whether a given $C^2$ boundary function satisfies the oscillation
bounds required in the above results, since the expression for $B$ depends
on a neighborhood of each point in $\partial \Omega^{-}$, which is hard to
describe explicitly. Furthermore, $B$ involves both the function and the
domain, making it difficult to treat them separately. To illustrate this, we
briefly explain the function $B$:

\paragraph{(I)}

In \cite{JS}, $B$ is constructed as follows. Since $\partial \Omega$ is
compact and of class $C^2$, there exists $l > 0$ (uniform for all $p \in
\partial \Omega$) such that the intersection $B_l(p) \cap \partial \Omega$
consists of a single connected component that is the graph over a domain $D$
in a hyperplane parallel to $T_p\partial \Omega$ of a function $g \in C^2(D)$
satisfying $|\nabla g| < 1$. Set $l^{\prime }= l/\sqrt{2}$. Also, since $%
\partial \Omega$ is compact and $C^2$, its principal curvatures are
uniformly bounded; let $k$ be an upper bound. Define 
\begin{align*}
\mathcal{A} &= \max \left\{ \frac{\pi}{l^{\prime }}, k, |D^2 \varphi|
\right\}, \\
\mathcal{C} &= \frac{\mathcal{A}}{(1 + |D\varphi|^2)^{8n}}, \\
\mathcal{H} &= \max_{\partial \Omega}(-H_{\partial \Omega}).
\end{align*}
Then $B$ is defined as 
\begin{equation}
B := \frac{1}{16n\mathcal{A}} \cdot 
\begin{cases}
\frac{\mathcal{C}}{\mathcal{H}}, & \text{if } \mathcal{C} \leq \mathcal{H},
\\ 
1 + \log\left( \frac{\mathcal{C}}{\mathcal{H}} \right), & \text{if } 0 < 
\mathcal{H} < \mathcal{C}, \\ 
+\infty, & \text{if } \mathcal{H} \leq 0.%
\end{cases}
\label{BE}
\end{equation}

\paragraph{(II)}

In \cite{ARS}, $B$ is defined through an expression involving $|A|$ and a
distance $\xi^{\prime }\in (0, d_0]$, where $d_0$ is such that the normal
exponential map 
\begin{equation}
\exp : \partial \Omega \times [0,d_0) \longrightarrow U_{d_0} = \{ z \in
\Omega \,;\, d(z) < d_0 \}  \label{exp}
\end{equation}
is a diffeomorphism (see Lemma 4 of \cite{ARS}). Here, $\xi^{\prime }$ plays
the same role as $l^{\prime }$ in the Jenkins-Serrin construction.

Given the above, we propose an alternative criterion to determine whether
the Dirichlet problem \eqref{DP} admits a solution for a given $\varphi \in
C^{2}(\overline{\Omega })$. Our goal is to retain the geometric insights of
previous results, while providing a more practical condition at least in the
Euclidean case that also applies to unbounded domains and, in the case of
Hadamard manifolds, can be phrased in a way that makes explicit the
relationship between the geometry of the domain and the behavior of the
boundary data that guarantees solvability (see Corollary~\ref{Cor1}). We are
inspired by \cite{RS}, which uses assumptions relating the $C^{2}$-norm of $%
\varphi $ and $\omega _{\varphi }(\overline{\Omega })$ with the maximal
radius of exterior tangent circles to $\partial \Omega $. Our criterion
arises from solving a second-order ODE (\ref{ODE}), which appears to be a
novel method for constructing barriers for the Dirichlet problem \eqref{DP}.

In special cases e.g., when $\Omega$ is bounded and the $C^2$ boundary data
lies between two horizontal slices it is possible to provide simpler
sufficient conditions involving $\omega_{\varphi}(\partial \Omega)$ to
ensure solvability (see \cite{AGLR}, \cite{ER}). Conditions on the
oscillation are also used for less regular boundary data, but they are
naturally more intricate (see \cite{W}, \cite{KT}, \cite{ANSS}, etc.).

\medskip

The ideas developed here extend naturally to the parabolic setting of
graphical mean curvature flow. Consider the evolution 
\begin{equation*}
\partial _{t}u=\mathcal{M}(u),
\end{equation*}%
where $\mathcal{M}$ is given by \eqref{Op}. The same differential inequality
underlying the ODE that generates our barrier reappears in the derivation of 
\emph{a priori} height and gradient controls for the parabolic problem. As a
consequence, the structural conditions ensuring solvability of the
stationary Dirichlet prob\nolinebreak lem provide, at the same time,
sufficient hypotheses to preserve graphicality and boundary regularity for
short time along the flow, even when $\partial \Omega $ is not mean convex
(see Section~\ref{SecFlow}). In the spirit of the work of Ecker-Huisken \cite%
{EH2} and within the Dirichlet setting of their subsequent work \cite{EH},
we establish short-time existence while dispensing with boundary mean
convexity. This yields a bridge between the elliptic theory and the mean
curvature flow of graphs and furnishes explicit sub/supersolutions usable at
the parabolic level (see, e.g., \cite{EH,AW}).

\medskip

In order to state our next results, we introduce some notation. Let $%
i:\nolinebreak M\rightarrow \lbrack 0,\infty ]$ denote the injectivity
radius function of $M$.

\begin{definition}
\label{egsc} We say that $\partial \Omega^{-}$ satisfies the \textbf{%
exterior sphere condition of radius} $r > 0$ if $\partial \Omega^{-} \neq
\emptyset$ and, at each $p \in \partial \Omega^{-}$, there exists a geodesic
sphere $\Sigma_p \subset M \setminus \Omega$ of radius $r$ tangent to $%
\partial \Omega$ at $p$, with $r < R$, where 
\begin{equation}
R := \inf \left\{ i(p_0) \,;\, p_0 \text{ is the center of } \Sigma_p,\, p
\in \partial \Omega^{-} \right\}.  \label{R}
\end{equation}
\end{definition}

This is abbreviated as the $r$-es condition.

\begin{definition}
We say that $\partial \Omega ^{-}$ satisfies the \textbf{locally Hadamard $r$%
-es condition} if it satisfies the $r$-es condition and, for each $p\in
\partial \Omega ^{-}$, letting $\ \Sigma _{p,s}:=d_{p}^{-1}(s)$ for $s\in
\lbrack 0,R-r]$ (with $\Sigma _{p,0}=\Sigma _{p}$ and $d_{p}=d_{M}(\cdot
,\Sigma _{p})$), and denoting $H_{\Sigma _{p,s}}$ the mean curvature of $%
\Sigma _{p,s}$ with respect to the outward orientation, we have: 
\begin{align}
H_{\Sigma _{p}}(p)& \leq H_{\Sigma _{p,s}}(x),  \label{HpHs} \\
|\nabla ^{2}d_{p}|(x)& \leq \max_{\Sigma _{p}}|\nabla ^{2}d_{p}|,
\label{HessC}
\end{align}%
for all $x\in \Sigma _{p,s}\cap \Omega $ and $s\in \lbrack 0,R-r]$.
\end{definition}

If $M$ is a Hadamard manifold and $\partial \Omega^{-}$ satisfies the $r$-es
condition, then it satisfies the locally Hadamard $r$-es condition with $R =
\infty$. The name reflects that the curvature behavior of the geodesic
spheres $\Sigma_{p,s}$ in this case mimics that of Hadamard manifolds.

Let us define: 
\begin{align}
\mu_r &:= \sup \left\{ \max_{\Sigma_p} |\nabla^2 d_p| \,;\, p \in \partial
\Omega^{-} \right\},  \label{mu2} \\
\lambda_r &:= \inf \left\{ H_{\Sigma_p}(p) \,;\, p \in \partial \Omega^{-}
\right\}.  \label{l2}
\end{align}
Note that $\lambda_r < 0$. For $M = \mathbb{R}^n$, we have $\lambda_r = -1/r$
and $\mu_r = 1/r$.

Let 
\begin{equation}
\tau = \tau(\varphi) := \max \left\{ \sup_{\overline{\Omega}} |\Delta
\varphi|,\ \sup_{\overline{\Omega}} |\nabla \varphi|,\ \sup_{\overline{\Omega%
}} |\nabla^2 \varphi| \right\},  \label{t}
\end{equation}
\begin{equation}
\varrho := \tau (1 + 2\tau^2),  \label{g}
\end{equation}
\begin{align}
a &:= \frac{2\tau^3 + 4\tau^2 + 3\tau}{\varrho},  \label{a} \\
b &:= -\frac{\tau^2 + 2\tau + 2}{\varrho},  \label{b} \\
c &:= \frac{\tau^2}{\varrho},  \label{c}
\end{align}
and denote $\omega := \omega_{\varphi}(\overline{\Omega})$.

If $\partial \Omega^{-}$ satisfies the $r$-es condition, define 
\begin{equation}
\theta := a + b(n-1)\lambda_r + c\mu_r.  \label{the}
\end{equation}
Note that $1 < a < \theta$ (the maximum possible value of $a$ is
approximately $3.7321$).

We now state the main result.

\begin{theorem}
\label{TA} Let $\Omega \subset M^n$ be a $C^2$-domain, where $M$ is a
complete Riemannian manifold, and let $\varphi \in C^2(\overline{\Omega})$
be given. Assume that $\tau < \infty$ if $\Omega$ is unbounded, where $\tau
= \tau(\varphi)$ is defined in \eqref{t}. If either:

\begin{itemize}
\item[(i)] $\partial \Omega^{-} = \emptyset$, or

\item[(ii)] $\partial \Omega^{-}$ satisfies the locally Hadamard $r$-es
condition, and 
\begin{equation}
\omega \leq \frac{1}{\varrho(\theta - 1)^2} \left[ \theta \left(1 -
e^{-\varrho(\theta - 1)\delta} \right) - \varrho(\theta - 1) \delta \right],
\label{mo}
\end{equation}
where $\varrho$ and $\theta$ are given by \eqref{g} and \eqref{the},
respectively, and 
\begin{equation}
\delta < \min \left\{ R - r,\ \frac{\ln \theta}{\varrho(\theta - 1)}
\right\},  \label{del}
\end{equation}
\end{itemize}

then the Dirichlet problem \eqref{DP} admits a solution. Moreover, the
solution is unique if $\Omega$ is bounded.
\end{theorem}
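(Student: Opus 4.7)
The strategy is to reduce the Dirichlet problem \eqref{DP} to the construction of upper and lower barriers at each boundary point and then invoke the standard Perron/continuity method. Case (i), $\partial\Omega^{-}=\emptyset$, is the mean-convex situation already handled by the cited results (\cite{F}, \cite{JS}, \cite{DHL}), so the substantive content is entirely in case (ii).

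For case (ii), at each $p\in\partial\Omega^{-}$ I will build local barriers of the form
\[
w_p^{\pm}(x) = \varphi(p) \pm \psi\bigl(d_p(x)\bigr), \qquad x \in \{0\le d_p\le\delta\}\cap\Omega,
\]
where $\psi\colon[0,\delta]\to[0,\infty)$ satisfies $\psi(0)=0$ and $\psi'>0$. A direct computation of $\mathcal{M}(w_p^{\pm})$, using $|\nabla d_p|\equiv 1$ and the fact that $\Delta d_p$ equals (minus) the mean curvature of the level set $\Sigma_{p,s}$, produces an expression that is quadratic in $\psi',\psi''$ and whose coefficients involve $H_{\Sigma_{p,s}}$, $|\nabla^2 d_p|$, and the first and second derivatives of $\varphi$. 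The bounds $|\nabla\varphi|,|\nabla^2\varphi|,|\Delta\varphi|\le\tau$ together with the locally Hadamard $r$-es inequalities \eqref{HpHs}--\eqref{HessC} allow me to replace these quantities by the constants $\tau$, $\lambda_r$, $\mu_r$, reducing the barrier condition to the majorant linear ODE
\[
\psi'' + \varrho(\theta-1)\psi' + \varrho = 0, \qquad \psi(0)=0,\ \psi'(0)=1,
\]
with $\varrho$ and $\theta$ as in \eqref{g} and \eqref{the}. The combinations $a,b,c$ of \eqref{a}--\eqref{c} and the identity $\theta=a+b(n-1)\lambda_r+c\mu_r$ are exactly what appear when the $\varphi$-derivative contributions and the geometric contributions are grouped and normalised by $\varrho$.

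Solving this ODE explicitly yields $\psi'(s)=[\theta e^{-\varrho(\theta-1)s}-1]/(\theta-1)$, which remains positive precisely for $s<\ln\theta/[\varrho(\theta-1)]$, accounting for the second constraint in \eqref{del}; the bound $\delta<R-r$ ensures that the normal exponential map \eqref{exp} from $\Sigma_p$ is a diffeomorphism on $\{d_p\le\delta\}$, so that $d_p$ is smooth there. Integrating $\psi'$ gives the explicit value
\[
\psi(\delta) = \frac{1}{\varrho(\theta-1)^2}\bigl[\theta\bigl(1-e^{-\varrho(\theta-1)\delta}\bigr) - \varrho(\theta-1)\delta\bigr],
\]
so the oscillation hypothesis \eqref{mo} is precisely $\omega\le\psi(\delta)$. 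This inequality is exactly what is needed to guarantee $w_p^-\le\varphi\le w_p^+$ on the parabolic boundary of $\{d_p\le\delta\}\cap\Omega$ for every $p\in\partial\Omega^{-}$, so the comparison principle for $\mathcal{M}$ upgrades $w_p^{\pm}$ to legitimate local barriers for any candidate solution.

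With barriers available at every boundary point (standard constructions on the mean-convex part of $\partial\Omega$, and the above on $\partial\Omega^{-}$), the existence of $u\in C^{2}(\overline{\Omega})$ solving \eqref{DP} follows from the standard Perron/continuity method combined with the known interior gradient estimates for minimal graphs in $M\times\mathbb{R}$; when $\Omega$ is unbounded, $\tau<\infty$ keeps all barrier constants uniform and one concludes by exhausting $\Omega$ by bounded subdomains and passing to the limit. Uniqueness on bounded domains is immediate from the comparison principle for $\mathcal{M}$. The main technical obstacle is the algebraic derivation of the majorant ODE: one must carefully handle the coupling between $\nabla\varphi$ and $\nabla d_p$ in the quadratic form $\nabla^2 w_p^{\pm}(\nabla w_p^{\pm},\nabla w_p^{\pm})$ and verify that after the uniform geometric replacements the coefficients collapse into exactly the closed forms \eqref{a}--\eqref{c} and \eqref{the}.
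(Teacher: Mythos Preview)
Your overall strategy matches the paper's: build local super/subsolutions at points of $\partial\Omega^{-}$ via the distance to an exterior tangent sphere, reduce the supersolution inequality to a linear first-order ODE for $\psi'$ by replacing all $\varphi$- and geometry-dependent quantities with the constants $\tau$, $\lambda_r$, $\mu_r$, and then read off \eqref{mo} as the condition $\omega\le\psi(\delta)$. The ODE you write and its explicit solution coincide with the paper's (there written as $\psi''-\sigma\psi'+\varrho=0$ with $\sigma=\varrho(1-\theta)$, and the choice $c_1=(\varrho-\sigma)/\sigma$ amounts exactly to your initial condition $\psi'(0)=1$).

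There is, however, a real slip in the barrier you write down. You set $w_p^{\pm}(x)=\varphi(p)\pm\psi(d_p(x))$, freezing $\varphi$ at its value at $p$. With that choice $\nabla w_p^{\pm}=\pm\psi'(d_p)\nabla d_p$, so the computation of $\mathcal{M}(w_p^{\pm})$ never sees $\nabla\varphi$ or $\nabla^{2}\varphi$; the constants $\tau,a,b,c$ and the ODE you display could not arise from it, and indeed your own remark about ``the coupling between $\nabla\varphi$ and $\nabla d_p$'' already presupposes the non-frozen ansatz. More seriously, the frozen barrier fails the boundary comparison: on $\partial\Omega\cap\{0<d_p<\delta\}$ one needs $w_p^{+}\ge\varphi$, i.e.\ $\psi(d_p(x))\ge\varphi(x)-\varphi(p)$, and since $0<\psi'\le 1$ while $|\nabla\varphi|$ may equal any $\tau>1$, this inequality is false in general. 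The paper instead takes $w_p(x)=\varphi(x)+\psi(d_p(x))$ (Proposition~\ref{pb}); then $w_p\ge\varphi$ is automatic because $\psi\ge 0$, and it is precisely the mixed terms $\nabla^{2}\varphi(E_n,\nabla\varphi)$, $\nabla^{2}d_p(\nabla\varphi,\nabla\varphi)$, $\nabla^{2}\varphi(E_n,E_n)$ coming from Lemma~\ref{LbHw} that, after estimation by $\tau$ and $\mu_r$ and normalisation by $\varrho$, collapse into the coefficients $a,b,c$ of \eqref{a}--\eqref{c}. If you replace $\varphi(p)$ by $\varphi(x)$ in your ansatz, the rest of your outline goes through and agrees with the paper.
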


\begin{corollary}
\label{Cor1} Let $\Omega \subset M^{n}$ be a $C^{2}$-domain, where $M$ is a
Hadamard man\nolinebreak ifold, and let $\varphi \in C^{2}(\overline{\Omega }%
)$ be given. Assume that $\tau <\infty $ if $\Omega $ is unbounded, where $%
\tau =\tau (\varphi )$ is defined in \eqref{t}. If either:

\begin{itemize}
\item[(i)] $\partial \Omega^{-} = \emptyset$, or

\item[(ii)] $\partial \Omega^{-}$ satisfies the $r$-es condition, and 
\begin{equation}
\frac{ \sqrt{a - \ln a - 1} - \sqrt{\varrho \omega}(a + c\mu_r - 1) }{ \sqrt{%
\varrho \omega} \, b(n - 1)} \leq \lambda_r,  \label{SH}
\end{equation}
\end{itemize}

then the Dirichlet problem \eqref{DP} admits a solution. Moreover, the
solution is unique if $\Omega$ is bounded.
\end{corollary}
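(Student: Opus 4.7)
The plan is to reduce Corollary~\ref{Cor1} to Theorem~\ref{TA} by eliminating the auxiliary parameter $\delta$. Case (i) is literally the same in both statements, so I focus on (ii). Because $M$ is Hadamard, the injectivity radius is infinite everywhere, so $R = \infty$, and Rauch/Hessian comparison applied to the geodesic spheres $\Sigma_{p,s}$ in nonpositive curvature yields \eqref{HpHs} and \eqref{HessC} for free --- this is the assertion made in the paragraph after the definition of the locally Hadamard $r$-es condition. Consequently Theorem~\ref{TA}(ii) applies, and it remains only to exhibit some $\delta$ simultaneously satisfying \eqref{del} and \eqref{mo}.

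Next I would optimise the right-hand side of \eqref{mo} over $\delta$. Set
\[
g(\delta) := \frac{1}{\varrho(\theta-1)^2}\bigl[\theta\bigl(1 - e^{-\varrho(\theta-1)\delta}\bigr) - \varrho(\theta-1)\delta\bigr], \qquad \delta^* := \frac{\ln \theta}{\varrho(\theta-1)}.
\]
A one-line differentiation gives $g'(\delta) = (\theta\,e^{-\varrho(\theta-1)\delta} - 1)/(\theta - 1)$, so $g$ is strictly increasing on $[0, \delta^*]$ with supremum
\[
g(\delta^*) = \frac{\theta - \ln \theta - 1}{\varrho(\theta-1)^2}.
\]
Since the upper bound on $\delta$ in \eqref{del} is strict, the pair \eqref{mo}--\eqref{del} admits a common solution iff $\varrho\omega(\theta-1)^2 < \theta - \ln \theta - 1$.

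The final step is to show that hypothesis \eqref{SH} implies this strict bound. Set $f(x) := x - \ln x - 1$; since $f'(x) = 1 - 1/x > 0$ for $x > 1$ and $a > 1$ (as remarked after \eqref{the}), $f$ is strictly increasing on $[a, \infty)$. The sign conditions $b < 0$, $\lambda_r < 0$, $\mu_r > 0$ force $b(n-1)\lambda_r + c\mu_r > 0$, hence $a < \theta$ and $f(a) < f(\theta)$. Multiplying \eqref{SH} through by the negative quantity $\sqrt{\varrho\omega}\,b(n-1)$ reverses the inequality, and regrouping yields
\[
\sqrt{\varrho\omega}\,\bigl[(a - 1 + c\mu_r) + b(n-1)\lambda_r\bigr] \leq \sqrt{a - \ln a - 1}, \qquad \text{i.e.\ } \sqrt{\varrho\omega}\,(\theta - 1) \leq \sqrt{f(a)}.
\]
Squaring and invoking $f(a) < f(\theta)$ produces $\varrho\omega(\theta - 1)^2 < f(\theta)$, which is exactly the strict bound needed in the previous paragraph. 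Theorem~\ref{TA}(ii) then delivers the solution, unique when $\Omega$ is bounded.

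The main obstacle is not geometric --- Theorem~\ref{TA} carries all the PDE content --- but rather the algebraic observation that the monotonicity of $f$ allows one to replace the transcendental bound $f(\theta)$ (in which $\theta$ sits inside a logarithm) by the strictly weaker but fully explicit $f(a)$, which separates cleanly into the terms of \eqref{SH}. The one point deserving care is tracking where strict versus non-strict inequalities enter, since the chosen $\delta$ must satisfy the strict upper bound in \eqref{del}; it is precisely the strict inequality $a < \theta$ that rescues the argument.
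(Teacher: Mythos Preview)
Your proposal is correct and follows essentially the same route as the paper: reduce to Theorem~\ref{TA} using $R=\infty$ in the Hadamard setting, maximise the right-hand side of \eqref{mo} at $\delta^*=\frac{\ln\theta}{\varrho(\theta-1)}$ to obtain $\omega\le \frac{\theta-\ln\theta-1}{\varrho(\theta-1)^2}$, then use the monotonicity of $f(x)=x-\ln x-1$ together with $a<\theta$ to pass to the explicit bound $\omega\le\frac{a-\ln a-1}{\varrho(\theta-1)^2}$, which rearranges to \eqref{SH}. Your handling of the strict inequality in \eqref{del} (via the strict inequality $f(a)<f(\theta)$) is in fact more careful than the paper's own argument, which simply sets $\delta=\delta^*$ without comment.
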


We remark that on the right-hand side of inequality \eqref{SH}, we have only
geometric information about the domain (specifically $\lambda_r,$ the
infimum of the mean curvature of the non mean convex part of the boundary of 
$\Omega$), while the left-hand side depends on the function $\varphi$ and
the ambient manifold $M$ (via $\mu_r$).

In the particular case where $M = \mathbb{R}^n$ and $\partial \Omega^{-}$
satisfies the $r$-es condition, we have $\lambda_r = -1/r$ and $\mu_r = 1/r$%
, so inequality \eqref{SH} becomes: 
\begin{equation}
0 < \frac{ \sqrt{\varrho \omega} \left( c - b(n - 1) \right) }{ \sqrt{a -
\ln a - 1} - (a - 1) \sqrt{\varrho \omega} } \leq r.  \label{SHE}
\end{equation}

Therefore, our condition is less restrictive than that of \cite{RS}, where $%
r\geq 1$ is required and, moreover, holds for arbitrary dimension. It also
allows us to conclude the solvability of the Dirichlet problem \eqref{DP} in
examples where the Jenkins-Serrin condition cannot be applied, as
illustrated at the end of Section~1.2.

Regarding the connection with the Jenkins--Serrin condition, we emphasize
that, even though our argument depends on an extension of $\varphi \in
C^{2}(\partial \Omega )$ into $\overline{\Omega }$, it is always possible to
construct such an extension---still denoted by $\varphi$---on a neighborhood 
$\Omega_{\varepsilon}\subset \Omega$ of $\partial \Omega$, for some $%
\varepsilon>0$, in such a way that 
\begin{equation*}
\sup_{\overline{\Omega}_{\varepsilon}}|\nabla \varphi| =\sup_{\partial
\Omega}|D\varphi|\quad\text{and}\quad \sup_{\overline{\Omega}%
_{\varepsilon}}|\nabla^2 \varphi| =\sup_{\partial \Omega}|D^2\varphi|,
\end{equation*}
where $D\varphi$ and $D^2\varphi$ denote the first and second tangential
derivatives of $\varphi$ on $\partial\Omega$ (see, for example, the
extension constructed in Section~2.1 of \cite{ARS}).

\subsection{Basic context}

Let $\left\langle \cdot,\cdot \right\rangle$ and $\nabla$ denote the
Riemannian metric and the Riemannian connection of $M$, respectively. Let $%
\Omega \subset M$ be a $C^2$ domain and suppose that $\partial \Omega^{-}$
satisfies the $r$-es condition. Fix a point $p \in \partial \Omega^{-}$ and
let $p_0$ be the center of the corresponding hypersurface $\Sigma_p$, as
given in Definition \ref{egsc}. Define 
\begin{equation*}
\Omega_R := B_R(p_0) \cap \Omega,
\end{equation*}
where $R$ is given by (\ref{R}). Consider the distance function 
\begin{equation*}
d(x) := d_p(x) = d_M(x,\Sigma_p) = d_M(x, p_0) - r,\quad x \in \Omega_R.
\end{equation*}

Since $\nabla d$ is orthogonal to the level sets $\Sigma_{p,s} := d^{-1}(s)$%
, for $s = d(x)$, with $\Sigma_p = \Sigma_0$ and $s \in [0, R - r)$, we can
define an orthonormal frame in a neighborhood of any given point $x \in
\Omega_R$ as follows. First, near $x \in \Sigma_{p,s} \cap \Omega_R$, we
choose an orthonormal frame $\{E_i\}_{i=1}^{n-1}$ tangent to $\Sigma_{p,s}$.
We then extend this frame to a neighborhood $\Lambda \subset \Omega_R$ of $x$
by parallel transport along the geodesic rays emanating from $p_0$, and we
continue to denote the extension by $\{E_i\}_{i=1}^{n-1}$. Define $E_n :=
\nabla d$ on $\Lambda$. Then, $\{E_i\}_{i=1}^n$ forms an orthonormal frame
on $\Lambda \subset \Omega_R$.\footnote{%
Without loss of generality, we may assume from the outset that $\Lambda =
B_R(p_0) \cap \Omega$, i.e., $\Lambda = \Omega_R$.}

Observe that, since the vector fields $E_i$ are parallel along the geodesics
issuing from $p_0$, we have $\nabla_{E_n} E_i = 0$ on $\Lambda$ for all $i =
1, \dots, n$.

\begin{lemma}
\label{Lb} In a neighborhood $\Lambda \subset \Omega_R$ as described above,
we have: 
\begin{equation*}
\nabla^2 d(E_i, E_n) = 0,\quad \text{for } i = 1, \dots, n,
\end{equation*}
and 
\begin{equation*}
\nabla^2 d(E_i, E_j) = -H_{E_n}(E_i, E_j),\quad \text{for } i,j = 1, \dots,
n-1,
\end{equation*}
where 
\begin{equation*}
H_{E_n}(E_i, E_j) := \left\langle -(\nabla_{E_i} E_n)^{\top}, E_j
\right\rangle.
\end{equation*}
In particular, $\nabla^2 d(E_i, E_i) = -II_{E_n}(E_i)$, where $II$ denotes
the second fundamental form of the hypersurfaces $\Sigma_{p,s}$, and 
\begin{equation*}
\Delta d = -\sum_{i=1}^{n-1} II_{E_n}(E_i).
\end{equation*}
\end{lemma}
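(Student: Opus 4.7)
The plan is to compute the Hessian $\nabla^2 d(X,Y) = \langle \nabla_X \nabla d, Y\rangle$ directly in the frame $\{E_i\}_{i=1}^n$, exploiting three structural facts: (a) $E_n = \nabla d$ is a unit vector field, (b) each $E_i$ with $i < n$ is tangent to the level set $\Sigma_{p,s}$ through the base point, and (c) the frame has been parallel-transported along the geodesics issuing from $p_0$, so $\nabla_{E_n} E_i = 0$.

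For the first identity $\nabla^2 d(E_i,E_n) = 0$ I would exploit that $|\nabla d|^2 \equiv 1$ on $\Omega_R$. Differentiating in any direction $X$ yields $0 = X|\nabla d|^2 = 2\langle \nabla_X \nabla d, \nabla d\rangle = 2\nabla^2 d(X,E_n)$. Taking $X = E_i$ for any $i$ (including $i=n$) gives the claim, and by symmetry of the Hessian this also covers $\nabla^2 d(E_n, E_i) = 0$.

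For the second identity I would use symmetry of the Hessian and write, for $i,j \le n-1$,
\[
\nabla^2 d(E_i,E_j) = \langle \nabla_{E_i}\nabla d, E_j\rangle = \langle \nabla_{E_i} E_n, E_j\rangle.
\]
Decomposing $\nabla_{E_i} E_n = (\nabla_{E_i} E_n)^\top + \langle \nabla_{E_i}E_n, E_n\rangle E_n$, the normal component vanishes because $\langle \nabla_{E_i} E_n, E_n\rangle = \frac{1}{2} E_i|E_n|^2 = 0$, so $\nabla_{E_i}E_n = (\nabla_{E_i} E_n)^\top$ is already tangent to $\Sigma_{p,s}$. Pairing with the tangent vector $E_j$ gives $\nabla^2 d(E_i,E_j) = \langle (\nabla_{E_i}E_n)^\top, E_j\rangle = -H_{E_n}(E_i,E_j)$, which is exactly the shape operator of $\Sigma_{p,s}$ evaluated on $E_i,E_j$. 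Specialising to $i=j$ yields $\nabla^2 d(E_i,E_i) = -II_{E_n}(E_i)$.

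Finally, for the Laplacian I would trace the Hessian in the orthonormal frame,
\[
\Delta d = \sum_{i=1}^n \nabla^2 d(E_i,E_i) = \nabla^2 d(E_n,E_n) + \sum_{i=1}^{n-1}\nabla^2 d(E_i,E_i) = -\sum_{i=1}^{n-1} II_{E_n}(E_i),
\]
where the $E_n$-term vanishes by the first identity. I do not anticipate any real obstacle: the hypotheses $|\nabla d|=1$ and parallel transport along radial geodesics are arranged precisely so that the second fundamental form of the level sets can be read off the Hessian. The only point requiring minor care is that the tangential/normal split is legitimate pointwise on $\Lambda$, which is guaranteed because each $E_j$ with $j\le n-1$ remains tangent to the leaf $\Sigma_{p,s}$ through its base point (the radial parallel transport preserves tangency, as $\nabla_{E_n}E_j=0$ and $E_j$ starts tangent to $\Sigma_p$).
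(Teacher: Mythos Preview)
Your proof is correct and is precisely the straightforward computation the paper alludes to: you exploit $|\nabla d|=1$ to kill the $E_n$-entries of the Hessian and then read off the second fundamental form of the level sets from the tangential block. The paper's own proof consists of the single phrase ``Straightforward calculation,'' so your argument is a faithful expansion of it.
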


\begin{proof}
Let $E_n=\nabla d$ and $\{E_1,\dots,E_{n-1}\}$ tangent to the level set $%
\Sigma_{p,s}$, extended off $\Sigma_{p,s}$ by parallel transport along the
radial geodesics from $p_0$. Since $|\nabla d|=1$ on the regular set of $d$,
we have 
\begin{equation*}
0=\tfrac12 E_i\!\left(|\nabla d|^2\right)=\langle \nabla_{E_i}\nabla
d,\nabla d\rangle =\nabla^2 d(E_i,E_n)
\end{equation*}
for every $i=1,\dots,n$, proving the first assertion.

For $i,j\leq n-1$, $\nabla ^{2}d(E_{i},E_{j})=\langle \nabla
_{E_{i}}E_{n},E_{j}\rangle $. Decompose $\nabla _{E_{i}}E_{n}$ into
tangential and normal parts relative to $\Sigma _{p,s}$: $\nabla
_{E_{i}}E_{n}=(\nabla _{E_{i}}E_{n})^{\top }+(\nabla _{E_{i}}E_{n})^{\perp }$%
. But $E_{n}$ is the unit normal to $\Sigma _{p,s}$, thus the Weingarten map 
$A_{E_{n}}$ is defined by $A_{E_{n}}(E_{i})=-(\nabla _{E_{i}}E_{n})^{\top }$
and the second fundamental form by $II_{E_{n}}(E_{i},E_{j})=\langle
A_{E_{n}}(E_{i}),E_{j}\rangle $. Hence 
\begin{equation*}
\nabla ^{2}d(E_{i},E_{j})=\langle \nabla _{E_{i}}E_{n},E_{j}\rangle
=\left\langle (\nabla _{E_{i}}E_{n})^{\top },E_{j}\right\rangle
=-\,II_{E_{n}}(E_{i},E_{j}),
\end{equation*}%
which yields the stated identity with $H_{E_{n}}(E_{i},E_{j}):=\langle
-(\nabla _{E_{i}}E_{n})^{\top },E_{j}\rangle $. Tak\nolinebreak ing the
trace over the tangential directions, 
\begin{equation*}
\Delta d=\sum_{k=1}^{n}\nabla ^{2}d(E_{k},E_{k})=\sum_{i=1}^{n-1}\nabla
^{2}d(E_{i},E_{i})+\nabla
^{2}d(E_{n},E_{n})=-\sum_{i=1}^{n-1}II_{E_{n}}(E_{i}),
\end{equation*}%
since $\nabla ^{2}d(E_{n},E_{n})=\langle \nabla _{E_{n}}E_{n},E_{n}\rangle
=0 $ by $|\nabla d|=1$ and metric-compatibility.
\end{proof}

\begin{lemma}
\label{LbHw} Let $\varphi \in C^2(\overline{\Omega})$ and $\psi \in
C^\infty([0, \infty))$ be given. Define the function $w \in C^2(\overline{%
\Lambda})$ by 
\begin{equation}
w = \varphi + \psi \circ d,  \label{w}
\end{equation}
where $\Lambda \subset \Omega_R$ is a neighborhood as defined above. Then, 
\begin{align}
\nabla^2 w(\nabla w, \nabla w) &= \left| \nabla \varphi \right|^2 \nabla^2
\varphi \left( \frac{\nabla \varphi}{\left| \nabla \varphi \right|}, \frac{%
\nabla \varphi}{\left| \nabla \varphi \right|} \right)  \notag \\
&\quad + \psi^{\prime }(d) \left[ 2 \left| \nabla \varphi \right| \nabla^2
\varphi \left( E_n, \frac{\nabla \varphi}{\left| \nabla \varphi \right|}
\right) + \left| \nabla \varphi \right|^2 \nabla^2 d \left( \frac{\nabla
\varphi}{\left| \nabla \varphi \right|}, \frac{\nabla \varphi}{\left| \nabla
\varphi \right|} \right) \right]  \notag \\
&\quad + \left[\psi^{\prime }(d)\right]^2 \nabla^2 \varphi(E_n, E_n) +
\psi^{\prime \prime }(d) \left( E_n(\varphi) + \psi^{\prime }(d) \right)^2.
\label{Hw}
\end{align}
\end{lemma}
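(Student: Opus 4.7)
The plan is to compute $\nabla w$ and $\nabla^2 w$ separately, expand $\nabla^2 w(\nabla w, \nabla w)$ by bilinearity, and then exploit Lemma \ref{Lb} to kill every term that would contaminate the formula. Since $\nabla d = E_n$ on $\Lambda$, we have immediately $\nabla w = \nabla \varphi + \psi'(d) E_n$. For the Hessian, the standard chain rule for a function composed with $d$ gives
\begin{equation*}
\nabla^2(\psi \circ d)(X, Y) = \psi''(d)\langle E_n, X\rangle \langle E_n, Y\rangle + \psi'(d)\, \nabla^2 d(X, Y),
\end{equation*}
so
\begin{equation*}
\nabla^2 w(X,Y) = \nabla^2 \varphi(X,Y) + \psi'(d)\, \nabla^2 d(X,Y) + \psi''(d)\langle E_n, X\rangle \langle E_n, Y\rangle.
\end{equation*}

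Setting $V := \nabla \varphi$ and expanding $\nabla^2 w(V + \psi'(d)E_n,\, V + \psi'(d)E_n)$ by bilinearity produces three groups of terms: $\nabla^2 w(V,V)$, $2\psi'(d)\,\nabla^2 w(V,E_n)$, and $(\psi'(d))^2\, \nabla^2 w(E_n,E_n)$. The crucial simplification comes from Lemma \ref{Lb}: decomposing $V = \sum_{i=1}^{n}\langle V, E_i\rangle E_i$ in the orthonormal frame, the identity $\nabla^2 d(E_i, E_n) = 0$ for all $i$ yields $\nabla^2 d(V, E_n) = 0$ and $\nabla^2 d(E_n, E_n) = 0$. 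This annihilates the two unwanted $\psi'(d)\nabla^2 d$ contributions in the cross and pure-$E_n$ pieces, leaving only $\psi'(d)\,\nabla^2 d(V,V)$ in the first group.

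The remaining step is to collect the three $\psi''(d)$ contributions — namely $\psi''(d)\langle E_n, V\rangle^2$, $2\psi'(d)\psi''(d)\langle E_n,V\rangle$, and $(\psi'(d))^2\psi''(d)$ — which combine into the perfect square $\psi''(d)\bigl(\langle E_n, V\rangle + \psi'(d)\bigr)^2 = \psi''(d)(E_n(\varphi) + \psi'(d))^2$. Rewriting the remaining bilinear evaluations at $V$ as $|\nabla\varphi|^2$ (resp.\ $|\nabla\varphi|$) times an evaluation at the unit vector $\nabla\varphi/|\nabla\varphi|$ reproduces exactly the displayed formula \eqref{Hw}. There is no real obstacle: this is essentially the bookkeeping counterpart to the author's "straightforward calculation" in Lemma \ref{Lb}. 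The only point requiring attention is to see that the two vanishing identities in Lemma \ref{Lb} are precisely what is needed to eliminate the would-be terms $\psi'(d)\nabla^2 d(V,E_n)$ and $\psi'(d)\nabla^2 d(E_n,E_n)$, and that the three Hessian-of-$\psi\circ d$ terms then fuse into the single squared expression appearing on the right-hand side.
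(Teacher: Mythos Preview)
Your proposal is correct and is essentially the same direct computation as the paper's, just organized through the bilinear form $\nabla^2 w$ rather than through the vector $\nabla_{\nabla w}\nabla w$; the key simplification you invoke from Lemma~\ref{Lb}, namely $\nabla^2 d(\,\cdot\,,E_n)=0$, is exactly the identity $\langle \nabla_{\nabla\varphi}E_n,E_n\rangle=0$ that the paper uses. The bookkeeping and the final collection into the perfect square are identical in both arguments.
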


\begin{proof}
We compute $\nabla _{\nabla w}\nabla w$ using linearity and the chain rule.
Since $w=\varphi +\psi (d)$ and $\nabla (\psi \circ d)=\psi ^{\prime
}(d)\nabla d=\psi ^{\prime }(d)E_{n}$, 
\begin{equation*}
\nabla _{\nabla w}\nabla w=\nabla _{\nabla \varphi }\nabla \varphi +\psi
^{\prime }(d)\left[ \nabla _{\nabla \varphi }E_{n}+\nabla _{E_{n}}\nabla
\varphi +\psi ^{\prime \prime }(d)E_{n}\right] +\psi ^{\prime \prime
}(d)E_{n}(\varphi )E_{n}.
\end{equation*}%
Taking the inner product with $\nabla w=\nabla \varphi +\psi ^{\prime
}(d)E_{n}$ and noting $\langle \nabla _{\nabla \varphi }E_{n},E_{n}\rangle
=\nolinebreak 0$ (since $\langle E_{n},E_{n}\rangle =1$), we obtain 
\begin{align*}
\langle \nabla _{\nabla w}\nabla w,\nabla w\rangle & =\nabla ^{2}\varphi
(\nabla \varphi ,\nabla \varphi )+\psi ^{\prime }(d)\big[2\,\nabla
^{2}\varphi (E_{n},\nabla \varphi )+\nabla ^{2}d(\nabla \varphi ,\nabla
\varphi )\big] \\
& \quad +\big[\psi ^{\prime }(d)\big]^{2}\nabla ^{2}\varphi
(E_{n},E_{n})+\psi ^{\prime \prime }(d)\big(E_{n}(\varphi )+\psi ^{\prime
}(d)\big)^{2},
\end{align*}%
which is \eqref{Hw} after factoring $|\nabla \varphi |$ where appropriate.
\end{proof}

\begin{proposition}
\label{pb} Assume that $\partial \Omega^{-}$ satisfies the locally Hadamard $%
r$-es condition. Let $\varphi \in C^{2}\left( \overline{\Omega} \right)$ be
given, and let $\sigma := \varrho \left( 1 - \theta \right)$, where $\varrho$
and $\theta$ are given by (\ref{g}) and (\ref{the}), respectively. Consider
the function 
\begin{equation}
\psi(s) = \frac{1}{\sigma} \left[ \varrho s + \left( \frac{\sigma - \varrho}{%
\sigma} \right) \left( e^{\sigma s} - 1 \right) \right], \quad s \in [0,
\delta] ,  \label{pisi}
\end{equation}
where 
\begin{equation*}
\delta < \min \left\{ R - r, \frac{1}{\sigma} \ln \left( \frac{\varrho}{%
\varrho - \sigma} \right) \right\},
\end{equation*}
and set $\Lambda_{p,\delta} = B_{r+\delta}(p_{0}) \cap \Omega$. Then the
function $w = \varphi + \psi \circ d$ belongs to $C^{2}(\overline{\Lambda}%
_{p,\delta})$ and satisfies 
\begin{equation*}
w(p) = \varphi(p), \quad w(x) \geq \varphi(x), \quad \text{and} \quad 
\mathcal{M}(w(x)) \leq 0 \quad \text{for all } x \in \overline{\Lambda}%
_{p,\delta}.
\end{equation*}
\end{proposition}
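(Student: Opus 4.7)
The plan is to verify four items in order: $w\in C^{2}(\overline{\Lambda}_{p,\delta})$, $w(p)=\varphi(p)$, $w\ge\varphi$, and finally the differential inequality $\mathcal M(w)\le 0$ on $\overline{\Lambda}_{p,\delta}$. The last is where the real work lies.

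The first three parts go through quickly. Since $\delta<R-r$, the normal exponential map in \eqref{exp} is a diffeomorphism on $\overline{\Lambda}_{p,\delta}$, so $d$ is $C^{\infty}$ and thus $w\in C^{2}$. From \eqref{pisi} I read off $\psi(0)=0$, giving $w(p)=\varphi(p)$. Differentiating \eqref{pisi} yields $\psi'(s)=[\varrho+(\sigma-\varrho)e^{\sigma s}]/\sigma$ with $\psi'(0)=1$ and $\psi''(s)=(\sigma-\varrho)e^{\sigma s}<0$ since $\sigma<0<\varrho$; so $\psi'$ is strictly decreasing from $1$ and first vanishes at $s^{\ast}:=\sigma^{-1}\ln\bigl(\varrho/(\varrho-\sigma)\bigr)$. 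The hypothesis $\delta<s^{\ast}$ forces $\psi'>0$ on $[0,\delta]$, and together with $\psi(0)=0$ this yields $\psi\ge 0$; since $d\ge 0$ on $\Omega_{R}$, we obtain $w\ge\varphi$.

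For $\mathcal M(w)\le 0$ I plug $\nabla w=\nabla\varphi+\psi'(d)E_{n}$, $\Delta w=\Delta\varphi+\psi'(d)\Delta d+\psi''(d)$, and the formula of Lemma~\ref{LbHw} into the defining expression of $\mathcal M$. The $\psi''(d)$-contributions regroup as $\psi''(d)\bigl(1+|\nabla\varphi|^{2}-E_{n}(\varphi)^{2}\bigr)$, which, by Cauchy--Schwarz and $\psi''<0$, is at most $\psi''(d)$. The remaining pieces I bound term by term via $|\nabla\varphi|,|\nabla^{2}\varphi|,|\Delta\varphi|\le\tau$, the locally Hadamard estimates $\Delta d\le-(n-1)\lambda_{r}$ and $|\nabla^{2}d|\le\mu_{r}$, and $1+|\nabla w|^{2}\le 1+(\tau+\psi'(d))^{2}$. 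Setting $\Lambda:=-(n-1)\lambda_{r}>0$, and keeping $\psi'(d)$ as a free variable (the key point), the collection produces
\[
\mathcal M(w)\le \varrho+L_{1}\psi'(d)+L_{2}\,\psi'(d)^{2}+\Lambda\,\psi'(d)^{3}+\psi''(d),
\]
whose constant term is exactly $\varrho=\tau+2\tau^{3}$ and whose $L_{1},L_{2}$ combine $\tau$, $\Lambda$ and $\mu_{r}$ in the precise combinations dictated by the definitions \eqref{a}--\eqref{the}.

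I then substitute the ODE $\psi''=\sigma\psi'-\varrho$, which the explicit $\psi$ in \eqref{pisi} satisfies by direct differentiation (with $\sigma=\varrho(1-\theta)$). This cancels the constant $\varrho$, and using $-\sigma=\varrho(\theta-1)$ together with the identities $\varrho(-b)=\tau^{2}+2\tau+2$ and $\varrho c=\tau^{2}$ I expect the residual cubic in $\psi'(d)$ to factor as
\[
\mathcal M(w)\le \psi'(d)\bigl(\psi'(d)-1\bigr)\bigl[\Lambda\,\psi'(d)+(2\tau+1)\Lambda+2\tau\bigr].
\]
On $[0,\delta]$ one has $\psi'(d)\in(0,1]$, so $\psi'(d)(\psi'(d)-1)\le 0$ while the bracket is strictly positive, giving $\mathcal M(w)\le 0$ as required. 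The main obstacle throughout is the algebraic bookkeeping: the coefficients $a,b,c$ and hence $\theta$ are tuned precisely so that $\psi'=1$ becomes a root of the residual cubic after substituting the ODE, which makes the factor $\psi'-1$ available to absorb the sign. This structure collapses if $\psi'$ is replaced by its upper bound $1$ too early in the estimation.
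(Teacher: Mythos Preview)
Your proof is correct and follows the paper's route almost exactly: the same supersolution $w=\varphi+\psi\circ d$, the same use of Lemma~\ref{LbHw}, the same regrouping of the $\psi''$--terms, and the same $\tau$-- and locally Hadamard bounds leading to a cubic inequality in $\psi'$ (with your $L_{1}=\beta$, $L_{2}=\alpha$ in the paper's notation). The only deviation is in the last algebraic step. The paper uses $0<\psi'\le 1$ to replace $(n-1)\lambda_{r}[\psi']^{3}$ and $-\alpha[\psi']^{2}$ by their linear minorants, collapsing the cubic condition to the linear ODE $\psi''-\sigma\psi'+\varrho=0$, which is then solved. You instead substitute the ODE first and observe that the residual cubic factors as $\psi'(\psi'-1)\bigl[\Lambda\psi'+(2\tau+1)\Lambda+2\tau\bigr]$; this is the same content, but your presentation makes transparent \emph{why} the constants $a,b,c$ (and hence $\theta$) are tuned as they are---precisely so that $\psi'=1$ is a root after the ODE is inserted. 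One minor slip: the smoothness of $d$ on $\overline{\Lambda}_{p,\delta}$ does not come from \eqref{exp} (which concerns $\partial\Omega$), but from $d=d_{M}(\cdot,p_{0})-r$ with $r+\delta<R\le i(p_{0})$ and $p_{0}\notin\Omega$.
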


\begin{proof}
We explain how the function (\ref{pisi}) was constructed. We start by
assuming only that $\psi \in C^{\infty }\left( [0,\infty \right) $ and
satisfies the conditions $\psi \left( 0\right) =0$, $\psi ^{\prime }\left(
s\right) >0$ and $\psi ^{\prime \prime }\left( s\right) <0$ for $s>0$,
necessary conditions for $\psi $ to serve as an upper barrier. We assume,
from now onward, that $\Lambda \subset \Omega _{R}$ is such that $p\in
\partial \Lambda $. From (\ref{Op}), we have $\mathcal{M}(w)(x)\leq 0$ in $%
\Lambda $ if 
\begin{equation*}
(1+|\nabla w|^{2})\Delta w(x)-\nabla ^{2}w(\nabla w,\nabla w)(x)\leq 0,
\end{equation*}%
where $w=\varphi +\psi \circ d$. By Lemma \ref{LbHw}, $\nabla ^{2}w(\nabla
w,\nabla w)$ can be expressed as in (\ref{Hw}). Since%
\begin{equation*}
\left( 1+\left\vert \nabla w\right\vert ^{2}\right) \Delta w=\left(
1+\left\vert \nabla w\right\vert ^{2}\right) \Delta \varphi +(1+\left\vert
\nabla w\right\vert ^{2})\psi ^{\prime \prime }\left( d\right) +\left(
1+\left\vert \nabla w\right\vert ^{2}\right) \psi ^{\prime }\left( d\right)
\Delta d,
\end{equation*}%
it follows that $\mathcal{M}(w)(x)\leq 0$ if the following sum is
nonpositive:%
\begin{eqnarray*}
&&\left( 1+\left\vert \nabla w\right\vert ^{2}\right) \Delta \varphi
+(1+\left\vert \nabla w\right\vert ^{2})\psi ^{\prime \prime }\left(
d\right) +\left( 1+\left\vert \nabla w\right\vert ^{2}\right) \psi ^{\prime
}\left( d\right) \Delta d+ \\
&&-\left\vert \nabla \varphi \right\vert ^{2}\nabla ^{2}\varphi \left( \frac{%
\nabla \varphi }{\left\vert \nabla \varphi \right\vert },\frac{\nabla
\varphi }{\left\vert \nabla \varphi \right\vert }\right) + \\
&&-2\psi ^{\prime }\left( d\right) \left\vert \nabla \varphi \right\vert
\nabla ^{2}\varphi \left( E_{n},\frac{\nabla \varphi }{\left\vert \nabla
\varphi \right\vert }\right) -\psi ^{\prime }\left( d\right) \left\vert
\nabla \varphi \right\vert ^{2}\nabla ^{2}d\left( \frac{\nabla \varphi }{%
\left\vert \nabla \varphi \right\vert },\frac{\nabla \varphi }{\left\vert
\nabla \varphi \right\vert }\right) + \\
&&-\left[ \psi ^{\prime }\left( d\right) \right] ^{2}\nabla ^{2}\varphi
(E_{n},E_{n})-\psi ^{\prime \prime }\left( d\right) \left( E_{n}\left(
\varphi \right) +\psi ^{\prime }\left( d\right) \right) ^{2}.
\end{eqnarray*}%
In $\Lambda $, we estimate: 
\begin{align*}
-|\nabla \varphi |^{2}\nabla ^{2}\varphi \left( \frac{\nabla \varphi }{%
|\nabla \varphi |},\frac{\nabla \varphi }{|\nabla \varphi |}\right) & \leq
\tau ^{3}, \\
-|\nabla \varphi |\nabla ^{2}\varphi \left( E_{n},\frac{\nabla \varphi }{%
|\nabla \varphi |}\right) & \leq \tau ^{2}, \\
-|\nabla \varphi |^{2}\nabla ^{2}d\left( \frac{\nabla \varphi }{|\nabla
\varphi |},\frac{\nabla \varphi }{|\nabla \varphi |}\right) & \leq \tau
^{2}\mu _{r}, \\
-\nabla ^{2}\varphi (E_{n},E_{n})& \leq \tau .
\end{align*}%
where $\tau $ is given by (\ref{t}). The third inequality uses that $\nabla
^{2}d_{p}|_{\Sigma _{p,s}}\leq \mu _{r}$ for all $s\in \lbrack 0,R]$, due to
the locally Hadamard $r$-es condition. Therefore, the sum is nonpositive in $%
\Lambda $ if 
\begin{eqnarray}
&&\left( 1+\left\vert \nabla w\right\vert ^{2}\right) \tau +(1+\left\vert
\nabla w\right\vert ^{2})\psi ^{\prime \prime }\left( d\right) +\left(
1+\left\vert \nabla w\right\vert ^{2}\right) \psi ^{\prime }\left( d\right)
\Delta d+  \notag \\
&&+\tau ^{3}+2\psi ^{\prime }\left( d\right) \tau ^{2}+\psi ^{\prime }\left(
d\right) \tau ^{2}\mu _{r}  \notag \\
&&+\left[ \psi ^{\prime }\left( d\right) \right] ^{2}\tau -\psi ^{\prime
\prime }\left( d\right) \left( E_{n}\left( \varphi \right) +\psi ^{\prime
}\left( d\right) \right) ^{2}  \label{i1}
\end{eqnarray}%
is non positive. Now compute%
\begin{eqnarray*}
\left( 1+\left\vert \nabla w\right\vert ^{2}\right) \psi ^{\prime \prime
}\left( d\right) &=&\psi ^{\prime \prime }\left( d\right) +\left\vert \nabla
\varphi \right\vert ^{2}\psi ^{\prime \prime }\left( d\right) + \\
&&+2\psi ^{\prime }\left( d\right) E_{n}\left( \varphi \right) \psi ^{\prime
\prime }\left( d\right) +\left[ \psi ^{\prime }\left( d\right) \right]
^{2}\psi ^{\prime \prime }\left( d\right)
\end{eqnarray*}%
and, since $\psi ^{\prime \prime }<0$, we obtain%
\begin{equation*}
\left( 1+\left\vert \nabla w\right\vert ^{2}\right) \psi ^{\prime \prime
}\left( d\right) -\psi ^{\prime \prime }\left( d\right) \left( E_{n}\left(
\varphi \right) +\psi ^{\prime }\left( d\right) \right) ^{2}\leq \psi
^{\prime \prime }\left( d\right) .
\end{equation*}%
Then (\ref{i1}) is non positive in $\Lambda $ if%
\begin{eqnarray}
&&\left( 1+\left\vert \nabla w\right\vert ^{2}\right) \tau +\left(
1+\left\vert \nabla w\right\vert ^{2}\right) \psi ^{\prime }\left( d\right)
\Delta d+  \notag \\
&&\tau ^{3}+2\psi ^{\prime }\left( d\right) \tau ^{2}+\psi ^{\prime }\left(
d\right) \tau ^{2}\mu _{r}  \notag \\
&&+\left[ \psi ^{\prime }\left( d\right) \right] ^{2}\tau +\psi ^{\prime
\prime }\left( d\right) \leq 0.  \label{i2}
\end{eqnarray}%
We have%
\begin{eqnarray*}
\left( 1+\left\vert \nabla w\right\vert ^{2}\right) \tau &\leq &\left(
1+\left\vert \nabla \varphi \right\vert ^{2}+2\psi ^{\prime }\left( d\right)
\left\vert \nabla \varphi \right\vert +\left[ \psi ^{\prime }\left( d\right) %
\right] ^{2}\right) \tau \\
&\leq &\tau +\tau ^{3}+2\tau ^{2}\psi ^{\prime }\left( d\right) +\tau \left[
\psi ^{\prime }\left( d\right) \right] ^{2}.
\end{eqnarray*}%
Thus, inequality (\ref{i2}) holds if%
\begin{eqnarray}
&&\tau +2\tau ^{3}+4\tau ^{2}\psi ^{\prime }\left( d\right) +2\tau \left[
\psi ^{\prime }\left( d\right) \right] ^{2}+\left( 1+\left\vert \nabla
w\right\vert ^{2}\right) \psi ^{\prime }\left( d\right) \Delta d+  \notag \\
&&+\psi ^{\prime }\left( d\right) \tau ^{2}\mu _{r}+\psi ^{\prime \prime
}\left( d\right)  \label{i3}
\end{eqnarray}%
is nonpositive. Using Lemma \ref{Lb}, for $x\in \Lambda $ and $s=d(x)\in
\lbrack 0,R)$, we have 
\begin{equation*}
\Delta d(x)=-\sum_{i=1}^{n-1}II_{E_{n}}(E_{i})(x)=-(n-1)H_{\Sigma _{p,s}}(x),
\end{equation*}%
where $H_{\Sigma _{p,s}}$ is the mean curvature of the immersion $\Sigma
_{p,s}$ with respect to $E_{n}=\nabla d$. Since $p\in \partial \Omega ^{-}$,
we have 
\begin{equation*}
H_{\Sigma _{p}}(p)\leq H_{\partial \Omega }(p)<0,
\end{equation*}%
and from the locally Hadamard $r$-es condition, 
\begin{equation*}
\lambda _{r}\leq H_{\Sigma _{p}}(p)\leq H_{\Sigma _{p,s}}(x),
\end{equation*}%
for all $x\in \Sigma _{p,s}\cap \Omega _{R}$ and $s\in \lbrack 0,R-r]$.
Hence, 
\begin{equation*}
\Delta d(x)\leq -(n-1)\lambda _{r}
\end{equation*}%
and we have (\ref{i3}) if 
\begin{eqnarray}
&&\tau +2\tau ^{3}+4\tau ^{2}\psi ^{\prime }\left( d\right) +2\tau \left[
\psi ^{\prime }\left( d\right) \right] ^{2}-\left( 1+\left\vert \nabla
w\right\vert ^{2}\right) \psi ^{\prime }\left( d\right) \left( n-1\right)
\lambda _{r}+  \notag \\
&&+\psi ^{\prime }\left( d\right) \tau ^{2}\mu _{r}+\psi ^{\prime \prime
}\left( d\right) \leq 0.  \label{ing1}
\end{eqnarray}%
Observing that%
\begin{equation*}
\left( 1+\left\vert \nabla w\right\vert ^{2}\right) \psi ^{\prime }\left(
d\right) \leq \left( 1+\tau ^{2}\right) \psi ^{\prime }\left( d\right) +2 
\left[ \psi ^{\prime }\left( d\right) \right] ^{2}\tau \mathfrak{+}\left[
\psi ^{\prime }\left( d\right) \right] ^{3}
\end{equation*}%
it follows that we have (\ref{ing1}) in $\Lambda $ if%
\begin{eqnarray}
\psi ^{\prime \prime }\left( d\right) &\leq &\left( n-1\right) \lambda _{r} 
\left[ \psi ^{\prime }\left( d\right) \right] ^{3}-2\tau \left[ 1-\left(
n-1\right) \lambda _{r}\right] \left[ \psi ^{\prime }\left( d\right) \right]
^{2}  \notag \\
&&-\left[ \left( 4+\mu _{r}\right) \tau ^{2}-\left( 1+\tau ^{2}\right)
\left( n-1\right) \lambda _{r}\right] \psi ^{\prime }\left( d\right)  \notag
\\
&&-\tau \left[ 1+2\tau ^{2}\right] .  \notag
\end{eqnarray}%
Letting 
\begin{equation*}
\alpha :=2\tau \lbrack 1-(n-1)\lambda _{r}],\quad \beta :=(4+\mu _{r})\tau
^{2}-(1+\tau ^{2})(n-1)\lambda _{r},
\end{equation*}%
we write the inequality above as 
\begin{equation}
\psi ^{\prime \prime }\leq \left( n-1\right) \lambda _{r}\left[ \psi
^{\prime }\right] ^{3}-\alpha \left[ \psi ^{\prime }\right] ^{2}-\beta \psi
^{\prime }-\varrho .  \label{ing2}
\end{equation}%
Assume $0<\psi ^{\prime }\leq 1$. Then%
\begin{eqnarray*}
\left( n-1\right) \lambda _{r}\psi ^{\prime } &\leq &\left( n-1\right)
\lambda _{r}\left[ \psi ^{\prime }\right] ^{3}<0 \\
-\alpha \psi ^{\prime } &\leq &-\alpha \left[ \psi ^{\prime }\right] ^{2}<0
\end{eqnarray*}%
so inequality \eqref{ing2} is satisfied if $\psi ^{\prime \prime }-\sigma
\psi ^{\prime }+\varrho \leq 0$, with $\sigma =\varrho (1-\theta )<0$ since $%
\theta >1$. Thus, we solve the ODE 
\begin{equation}
\psi ^{\prime \prime }-\sigma \psi ^{\prime }+\varrho =0,  \label{ODE}
\end{equation}%
with initial condition $\psi (0)=0$, whose solution is 
\begin{equation*}
\psi (s)=\frac{1}{\sigma }\left[ \varrho s-c_{1}(e^{\sigma s}-1)\right] .
\end{equation*}%
We require $0<\psi ^{\prime }(s)\leq 1$, i.e., 
\begin{equation*}
0<\frac{1}{\sigma }(\varrho -c_{1}\sigma e^{\sigma s})\leq 1,
\end{equation*}%
which implies 
\begin{equation*}
-\frac{\varrho }{\sigma }e^{-\sigma s}<-c_{1}\leq \left( \frac{\sigma
-\varrho }{\sigma }\right) e^{-\sigma s}.
\end{equation*}%
Let $\rho =R-r$. Then everything is in order if 
\begin{equation}
\frac{\varrho -\sigma }{\sigma }\leq c_{1}<\frac{\varrho }{\sigma }%
e^{-\sigma \rho }.  \label{c1bis}
\end{equation}%
This holds if 
\begin{equation}
\ln \left( \frac{\varrho }{\varrho -\sigma }\right) <\sigma \rho .
\label{lnd}
\end{equation}%
As $\frac{\varrho }{\varrho -\sigma }<1$, condition \eqref{lnd} is satisfied
if 
\begin{equation*}
\rho <\frac{1}{\sigma }\ln \left( \frac{\varrho }{\varrho -\sigma }\right) .
\end{equation*}%
Therefore, if 
\begin{equation}
\delta <\min \left\{ R-r,\frac{1}{\sigma }\ln \left( \frac{\varrho }{\varrho
-\sigma }\right) \right\} ,  \label{delta}
\end{equation}%
then, for 
\begin{equation*}
\psi (s)=\frac{1}{\sigma }\left[ \varrho s-c_{1}(e^{\sigma s}-1)\right]
,\quad s\in \lbrack 0,\delta ],
\end{equation*}%
with $c_{1}=\frac{\varrho -\sigma }{\sigma }$, we have $\psi (0)=0$, $0<\psi
^{\prime }(s)\leq 1$, and $\psi ^{\prime \prime }(s)=-c_{1}\sigma e^{\sigma
s}<0$. Furthermore, $\psi (s)\geq 0$ on $[0,\delta ]$. Finally, defining 
\begin{equation*}
\Lambda _{p,\delta }=B_{r+\delta }(p_{0})\cap \Omega ,
\end{equation*}%
we obtain the function 
\begin{equation*}
w(x)=\varphi (x)+\psi (d(x)),
\end{equation*}%
with 
\begin{equation}
\psi (s)=\frac{1}{\sigma }\left[ \varrho s+\left( \frac{\sigma -\varrho }{%
\sigma }\right) (e^{\sigma s}-1)\right] ,\quad 0\leq s\leq \delta ,
\label{psi}
\end{equation}%
which satisfies $w(p)=\varphi (p)$, $w(x)\geq \varphi (x)$ for all $x\in 
\overline{\Lambda }_{p,\delta }$, and $\mathcal{M}(w(x))\leq 0$ in $%
\overline{\Lambda }_{p,\delta }$.
\end{proof}

\subsection{Proofs of Theorem \protect\ref{TA} and Corollary \protect\ref%
{Cor1}}

\textbf{Proof of Theorem \ref{TA}.}

In the case where $\partial \Omega ^{-}=\emptyset $ and $\Omega $ is
bounded, the result follows, for instance, from Theorem 1 of \cite{ARS}.

Suppose $\Omega $ bounded and that $\partial \Omega ^{-}$ satisfies the
locally Hadamard $r$-es condition.

Given any $p\in \partial \Omega ^{-}$, consider the neighborhood $\Lambda
_{p,\delta }$ as defined in Proposition \ref{pb}, where $\delta $ satisfies
condition (\ref{delta}). Define the function 
\begin{equation*}
w_{p}=\varphi +\psi (d_{p}),
\end{equation*}%
where $\psi $ is as in (\ref{psi}). To construct a barrier from above at $p$%
, we require that 
\begin{equation*}
w_{p}(x)=\varphi (x)+\psi (\delta )\geq \sup_{\overline{\Omega }}\varphi
\quad \text{on }\partial \Lambda _{p,\delta }\setminus \partial \Omega ,
\end{equation*}%
which holds if 
\begin{equation*}
\psi (\delta )\geq \sup_{\overline{\Omega }}\varphi -\inf_{\overline{\Omega }%
}\varphi =:\omega .
\end{equation*}

Since $\delta $ is independent of the point $p\in \partial \Omega ^{-}$,
Proposition \ref{pb} implies that, if 
\begin{equation}
\omega \leq \frac{1}{\sigma }\left[ \varrho \delta +\left( \frac{\sigma
-\varrho }{\sigma }\right) \left( e^{\sigma \delta }-1\right) \right] ,
\label{osc}
\end{equation}%
(which is the expression (\ref{mo}) since that $\sigma =\varrho (1-\theta )$%
), then we can construct an upper barrier at each $p\in \partial \Omega ^{-}$
by setting 
\begin{equation*}
v_{p}(x)=%
\begin{cases}
\min \left\{ w_{p}(x),\inf_{\partial \Lambda _{p,\delta }\setminus \partial
\Omega }w_{p}\right\} & \text{if }x\in \Lambda _{p,\delta }, \\ 
\inf_{\partial \Lambda _{p,\delta }\setminus \partial \Omega }w_{p}, & \text{%
if }x\in \Omega \setminus \Lambda _{p,\delta }.%
\end{cases}%
\end{equation*}

For a lower barrier at $p\in \partial \Omega ^{-}$, just work with $\xi
_{p}:=\varphi -\psi \circ d$.

At points of $\partial \Omega$ where the domain is mean convex, we proceed
as in the proof of Theorem 1 of \cite{ARS}, and no additional geometric
assumptions are required to construct barriers.

The conclusion now follows from classical elliptic PDE theory \cite{GT}.

\bigskip If $\Omega $ is unbounded and $\tau <\infty $, the result follows
directly from the method of Perron (see Theorem 5 of \cite{RT}). Indeed,
define, at given $x\in \overline{\Omega },$ 
\begin{equation*}
u\left( x\right) =\sup \left\{ s\left( x\right) ;s\in C^{0}\left( \Omega
\right) \text{ is generalized subsolution of }\mathcal{M}\text{ and }%
s|_{\partial \Omega }\leq \varphi |_{\partial \Omega }\right\} .
\end{equation*}%
Since every subsolution used to define $u$ is bounded above by $\sup_{\Omega
}\varphi $, the function $u$ is well-defined. Moreover, by Perron's method,
we have $u\in \nolinebreak C^{\infty }\left( \Omega \right) $ and $\mathcal{M%
}\left( u\right) =0.$ The barriers at the boundary ensures that $u\in
C^{2}\left( \overline{\Omega }\right) $ and $u|_{\partial \Omega }=\varphi .$

\textbf{Proof of Corollary \ref{Cor1}.}

When $M$ is a Hadamard manifold, we have $R=\infty $, so we can take $\delta 
$ in (\ref{delta}) as 
\begin{equation*}
\delta =\frac{1}{\sigma }\ln \left( \frac{\varrho }{\varrho -\sigma }\right)
.
\end{equation*}%
Substituting this into (\ref{osc}) and recalling that $\sigma =\varrho
(1-\theta )$, condition (\ref{osc}) becomes 
\begin{equation}
\omega \leq \frac{1}{\varrho (\theta -1)^{2}}\left( \theta -\ln \theta
-1\right) .  \label{HC}
\end{equation}%
Since $1<a<\theta $ and observing that the function $f(x)=x-\ln x-1$ is
positive and increasing for $x>1$, we have that 
\begin{equation}
\omega \leq \frac{a-\ln a-1}{\varrho (\theta -1)^{2}}  \label{HC2}
\end{equation}%
implies (\ref{HC}). Now observe that 
\begin{equation*}
\theta =a+b(n-1)\lambda _{r}+c\mu _{r}.
\end{equation*}%
Plugging this into (\ref{HC2}) leads to the equivalent inequality 
\begin{equation}
\frac{\sqrt{a-\ln a-1}-\sqrt{\varrho \omega }(a+c\mu _{r}-1)}{\sqrt{\varrho
\omega }\,b(n-1)}\leq \lambda _{r}.  \label{Fl}
\end{equation}

\begin{remark}
Since $\lambda_r < 0$, the left-hand side of (\ref{Fl}) must be negative.
This occurs if 
\begin{equation*}
\omega < \frac{a - \ln a - 1}{\varrho (a + c\mu_r - 1)^2},
\end{equation*}
which is guaranteed by condition (\ref{HC2}).
\end{remark}

\bigskip

\begin{example}
We now present an example illustrating that our condition ensures the
existence of a solution to the Dirichlet problem (\ref{DP}) even in cases
where the Jenkins-Serrin criterion fails to do so.

Let $\Omega \subset \mathbb{R}^{2}$ be the bounded domain whose boundary is
parametrized by 
\begin{equation*}
\gamma (t)=\xi (t)(\cos t,\sin t),\quad t\in \lbrack 0,2\pi ],
\end{equation*}%
where 
\begin{equation*}
\xi (t)=\sqrt{4\cos (2t)+\sqrt{16\cos ^{2}(2t)+12}}.
\end{equation*}%
Let $\varphi \in C^{2}(\overline{\Omega })$ be given by 
\begin{equation*}
\varphi (x,y)=\frac{e^{-y}}{20}+1.
\end{equation*}

\begin{figure}[h]
\centering
\IfFileExists{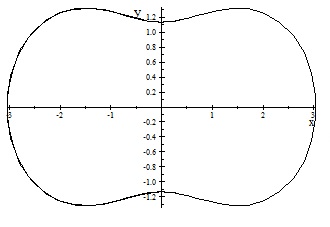}{\fbox{\includegraphics[scale=0.75]{DomE.jpg}}}{%
\fbox{\rule{5cm}{3cm}\ \text{[DomE.jpg missing]}}}
\caption{The domain used in the example}
\end{figure}

Regarding the geometry of $\partial \Omega $, we compute 
\begin{equation*}
-0.45\approx \lambda _{r}\leq k_{\partial \Omega }\leq k:=\sup_{\partial
\Omega }|k_{\partial \Omega }|\approx 0.82,
\end{equation*}%
so that $\mu _{r}=-\lambda _{r}$. For the boundary data $\varphi |_{\partial
\Omega }$, we find 
\begin{equation*}
\omega _{\varphi }(\overline{\Omega })=\omega _{\varphi }(\partial \Omega
)\approx 0.17,
\end{equation*}%
with $\tau \approx 0.1871$, $\theta \approx 9.05$, and the maximal
admissible value of $\delta $ from (\ref{delta}) is approximately $1.37$.
Choosing $\delta =1$ for simplicity, the right-hand side of inequality (\ref%
{mo}) becomes approximately $0.43$. Since $\omega \approx 0.17<0.43$, the
condition is satisfied.

On the other hand, following the Jenkins-Serrin method, we proceed as in
case I) above for computing $B$. Using the corresponding notation and in
order to obtain the maximum value for $B$, we estimate: 
\begin{equation*}
l=0.25,\quad l^{\prime }\approx 0.1767,\quad \frac{\pi }{l^{\prime }}\approx
17.7792,\quad k\approx 0.82,
\end{equation*}%
\begin{equation*}
|D^{2}\varphi |\approx 0.1871,\quad A\approx 17.7792,\quad C\approx
10.2526,\quad \mathcal{H}\approx 0.45.
\end{equation*}%
These values yield 
\begin{equation*}
B\approx 0.0072<\omega _{\varphi }(\partial \Omega ),
\end{equation*}%
so the Jenkins-Serrin condition fails, while our condition still guarantees
ex\nolinebreak istence.
\end{example}

\section{Mean Curvature Flow of Graphs without Mean Convexity}

\label{SecFlow}

The differential inequality leading to the ordinary differential equation 
\begin{equation}
\psi ^{\prime \prime }-\sigma \,\psi ^{\prime }+\varrho =0,
\label{ODEreprise}
\end{equation}%
introduced in the previous section, naturally reappears in the study of the
mean curvature flow of graphs. The same function $\psi $ that provides
elliptic barriers for the Dirichlet problem also yields parabolic barriers
that allow one to start and control the evolution even when $\partial \Omega 
$ is not mean convex. Thus equation \eqref{ODEreprise} plays a central role
in connecting the elliptic and parabolic regimes.

In the elliptic construction, the auxiliary function is written in terms of
the distance $d_{p}(x)=\mathrm{dist}(x,\Sigma _{p})$ to the exterior
geodesic sphere $\Sigma _{p}$ associated with each point $p\in \partial
\Omega ^{-}$, since these spheres encode the geometric $r$--exterior sphere
condition.

In the parabolic setting, in order to obtain barriers that are compatible
with the Dirichlet condition along the whole boundary, it is more natural to
work with the distance to the boundary, 
\begin{equation*}
d(x)=\mathrm{dist}(x,\partial \Omega ).
\end{equation*}%
Although $d(x)$ and $d_{p}(x)$ need not to coincide as functions in a
neighborhood of $p$, along $\partial \Omega $ the two functions have the
same first and second derivatives in the normal direction at $p$, that is,
their first and second jets at $p$ coincide. Since all the differential
inequalities leading to the ODE~\eqref{ODEreprise} depend only on the values
of $\nabla d$ and $\nabla ^{2}d$ along the boundary, this second--order
agreement is sufficient to transfer the elliptic barrier construction to the
parabolic framework without any modification of the resulting ODE.

Let $\Omega \subset M^{n}$ be a $C^{2}$ domain in a complete Riemannian
manifold and let $\varphi \in C^{2}(\overline{\Omega })$ satisfy the
structural condition of Theorem \ref{TA} (or, in the Hadamard case,
condition (\ref{SH})). We conclude from the above arguments that there is a
tubular neighborhood of $\partial \Omega $, 
\begin{equation*}
U_{\delta }:=\{x\in \Omega :d(x)<\delta \},
\end{equation*}%
where the signed distance function $d$ to $\partial \Omega $ is of class $%
C^{2}$ and, chosen $\psi $ as in Section 1.1 (Proposition \ref{pb}), 
\begin{equation*}
w^{\pm }(x):=\varphi (x)\pm \psi (d(x)),\qquad x\in U_{\delta },
\end{equation*}%
are such that $w^{-}$ is a subsolution and $w^{+}$ is a supersolution for
the operator $\mathcal{M}$ in $U_{\delta }$.

\begin{theorem}
Let $\Omega \subset M^{n}$ be a $C^{2}$ domain in a complete Riemannian
manifold, and let $\varphi \in C^{2}(\overline{\Omega })$ satisfy the
structural condition of Theorem \ref{TA} (or, in the Hadamard case,
condition (\ref{SH})). Let $u_{0}\in C^{2}(\overline{\Omega })$ satisfy 
\begin{equation*}
u_{0}|_{\partial \Omega }=\varphi ,~w^{-}(x)\leq u_{0}(x)\leq w^{+}(x)\text{
for all }x\in U_{\delta },\text{ }\left\vert u_{0}\right\vert \leq
\inf_{\partial U_{\delta }\backslash \partial \Omega }w^{+}\text{,}
\end{equation*}%
where $w^{\pm }$ and $U_{\delta }$ are as defined above. Then there exists $%
T>0$, depending only on $\varphi $ (through $\tau (\varphi )$) and on the
geometric parameters $(\lambda _{r},\mu _{r})$ arising in the construction
of the boundary barriers \emph{and, in particular, independent of the choice
of $u_{0}$}, such that the initial--boundary value problem 
\begin{equation}
\begin{cases}
\partial _{t}u=\mathcal{M}(u), & (x,t)\in \Omega \times (0,T), \\ 
u(x,0)=u_{0}(x), & x\in \Omega , \\ 
u(x,t)=\varphi (x), & (x,t)\in \partial \Omega \times \lbrack 0,T),%
\end{cases}
\label{MCF2}
\end{equation}%
admits a classical solution 
\begin{equation*}
u\in C^{2,1}(\overline{\Omega }\times \lbrack 0,T]).
\end{equation*}%
Moreover, for every $t\in \lbrack 0,T]$, the hypersurface 
\begin{equation*}
\Gamma _{t}=\{(x,u(x,t)):x\in \Omega \}
\end{equation*}%
remains a graph over $\Omega $, even if $\partial \Omega $ is not mean
convex.
\end{theorem}

\begin{proof}
The proof relies directly on the equation~\eqref{ODEreprise}. Let $d(x)$
denote the signed distance to $\partial \Omega $ and consider $w^{\pm }$ and 
$U_{\delta }$ are as defined above.

Since $w^{\pm }$ are independent of $t$, we have $\partial _{t}w^{\pm }=0$,
and because $\psi $ satisfies~\eqref{ODEreprise}, the functions $w^{+}$ and $%
w^{-}$ verify 
\begin{equation*}
\partial _{t}w^{+}-\mathcal{M}(w^{+})\geq 0,\qquad \partial _{t}w^{-}-%
\mathcal{M}(w^{-})\leq 0
\end{equation*}%
in the tubular neighborhood $U_{\delta }$ of $\partial \Omega $, and both
coincide with $\varphi $ on $\partial \Omega $.

Under the quantitative restriction~\eqref{mo} (or~\eqref{SH}), we have $%
w^{-}(x)\leq \varphi (x)\leq w^{+}(x)$ on $\partial \Omega $ and $w^{-}\leq
w^{+}$ throughout $U_{\delta }$. To apply the parabolic maximum principle
globally, we extend $w^{\pm }$ from $U_{\delta }$ to whole $\Omega $ by a
standard $C^{2}$ gluing argument, by smoothing the capped function 
\begin{equation*}
W^{+}\left( x\right) =\left\{ 
\begin{array}{c}
\min \{w^{+}\left( x\right) ,\inf_{\partial \Omega _{\delta }\backslash
\partial \Omega }w^{+}\}\text{ se }x\in \overline{\Omega }_{\delta } \\ 
\inf_{\partial \Omega _{\delta }\backslash \partial \Omega }w^{+}\text{ se }%
x\in \Omega \backslash \Omega _{\delta }%
\end{array}%
\right.
\end{equation*}%
(which is a $C^{0}$ supersolution in $\overline{\Omega }$ for the operator $%
\mathcal{M}$) in such a way that the supersolution property is preserved,
and we call it again $W^{+}$ (analogous construction for $W^{-}$). From our
hypothesis on $u_{0}$, it follows that $W^{-}\leq u_{0}\leq W^{+}$ on $%
\overline{\Omega }$ with $u_{0}=\varphi $ on $\partial \Omega $. The
classical theory for quasilinear parabolic equations (see \cite{L})
guarantees local existence and uniqueness provided the structure is
uniformly parabolic. We thus verify this uniform parabolicity within the
region bounded by $W^{-}$ and $W^{+}$.

Since $\psi $ is smooth and monotone, the derivatives $|\nabla w^{\pm }|$
are bounded by constants depending only on $\tau (\varphi )$ and $(\lambda
_{r},\mu _{r})$. By the parabolic maximum principle, the solution $u$ of~%
\eqref{MCF2} remains confined between $W^{-}$ and $W^{+}$, so that 
\begin{equation*}
W^{-}(x)\leq u(x,t)\leq W^{+}(x)\quad \text{for all }(x,t)\in \overline{%
\Omega }\times \lbrack 0,T].
\end{equation*}%
Hence $|\nabla u|$ is uniformly bounded because $\psi ^{\prime }(d)$ is
bounded, and therefore the coefficients 
\begin{equation*}
a^{ij}(\nabla u)=\delta ^{ij}(1+|\nabla u|^{2})-u_{i}u_{j}
\end{equation*}%
remain positive definite. The equation~\eqref{MCF2} is thus uniformly
parabolic during the time interval considered.

With this structure established, the fixed--point argument applies: one
linearizes \eqref{MCF2} around $u_{0}$, solves the linearized problem via
the Schauder theory, and iterates. The limit yields a solution $u\in C^{2,1}$
on $[0,T]$, where $T$ depends only on the upper bound of $|\nabla u|$
obtained from the barriers, hence only on $\tau (\varphi )$, $\lambda _{r}$
and $\mu _{r}$.

The boundary control provided by $\psi $ replaces the classical mean
convexity assumption. Since the gradient is uniformly bounded, the graph
cannot develop vertical tangents and remains graphical for all $t\in \lbrack
0,T]$. Interior gradient and curvature estimates, identical to those derived
by Ecker--Huisken~\cite{EH}, ensure that no curvature blow--up occurs before 
$T$. Finally, $u(x,t)\rightarrow u_{0}(x)$ as $t\rightarrow 0$ by parabolic
regularity and compatibility of the initial and boundary data.

Consequently, the function $\psi $ solving the ODE~\eqref{ODEreprise}
provides a universal stationary parabolic barrier that replaces the mean
convexity of the boundary, establishing local--in--time existence of the
mean curvature flow of graphs over arbitrary domains.
\end{proof}

\begin{remark}
Let $[0,U)$ denote the maximal time interval of existence of the classical
solution $u$ to \eqref{MCF2}. By the interior estimates of Ecker--Huisken,
for any sequence $t_{k}\nearrow U$ there exists a subsequence (still denoted 
$t_{k}$) such that $u(\cdot ,t_{k})$ converges in $C_{\mathrm{loc}}^{\infty
}(\Omega )$ to a function $v\in C^{\infty }(\Omega )$. Moreover, $v$
satisfies the stationary minimal surface equation $\mathcal{M}(v)=0$ in $%
\Omega $, hence defines a minimal graph in the interior of the domain.
However, since all these estimates are purely interior, no information can
be extracted in general about the behavior of the solution at the boundary
as $t\nearrow U$. In particular, the possible obstruction to extending the
flow beyond $U$ is entirely of boundary nature.
\end{remark}

\bigskip

{\small Ari J. Aiolfi}

{\small Department of Mathematics/CCNE - Federal University of Santa Maria}

{\small Santa Maria RS - Brazil (ari.aiolfi@ufsm.br)}

{\small \bigskip }

{\small Giovanni da Silva Nunes \& Lisandra Sauer}

{\small Institute of Physics and Mathematics - Federal University of Pelotas}

{\small Pelotas RS - Brazil (giovanni.nunes@ufpel.edu.br,
lisandra.sauer@ufpel.edu.br)}

{\small \bigskip }

{\small Jaime B. Ripoll}

{\small Department of Mathematics/IME - University of Sao Paulo}

{\small Department of Mathematics/IME - Federal University of Rio Grande do Sul}

{\small Porto Alegre RS - Brazil (jaime.ripoll@ufrgs.br)}

{\small \bigskip }

{\small Rodrigo Soares}

{\small Institute of Mathematics and Statistics - Federal University of Rio
Grande}

{\small Rio Grande RS - Brazil (rodrigosoares@furg.br)}

\end{document}